\theoremstyle{plain}
\newtheorem{theorem}{Theorem}[section]
\newtheorem{lemma}[theorem]{Lemma}
\theoremstyle{definition}
\newtheorem{example}[theorem]{Example}
\newtheorem{remark}[theorem]{Remark}
\numberwithin{equation}{section}
\newcommand{\NN}{{\mathbb N}}
\newcommand{\ZZ}{{\mathbb Z}}
\newcommand{\QQ}{{\mathbb Q}}
\newcommand{\aaa}{{\mathbb A}}
\newcommand{\BB}{\mathcal{B}}
\newcommand{\RR}{{\mathbb R}}
\newcommand{\oo}{\mathbb{O}}
\newcommand{\SC}{\mathcal{C}}
\title{On De Graaf spaces of pseudoquotients}
\author{Anya Katsevich and Piotr Mikusi\'nski\footnote{Corresponding author. Email: piotr.mikusinski@ucf.edu}\\
Department of Mathematics\\
University of Central Florida\\
 Orlando, Florida, USA}
\begin{document}
\maketitle

\begin{abstract}  A space of pseudoquotients $\mathcal{B}(X,S)$ is defined as equivalence classes of pairs $(x,f)$, where $x$ is an element of a non-empty set $X$, $f$ is an element of $S$, a commutative semigroup of injective maps from $X$ to $X$, and $(x,f) \sim (y,g)$ if $gx=fy$. In this note we consider a generalization of this construction where the assumption of commutativity of $S$ by Ore type conditions. As in the commutative case, $X$ can be identified with a subset of $\mathcal{B}(X,S)$ and $S$ can be extended to a group $G$ of bijections on $\mathcal{B}(X,S)$. We introduce a natural topology on $\mathcal{B}(X,S)$ and show that all elements of $G$ are homeomorphisms on $\mathcal{B}(X,S)$. 
\end{abstract}

\bigskip

\noindent {\bf 2010 MSC}: Primary 44A40  Secondary 16U20

\noindent {\bf Key words and phrases}: Pseudoquotients, Ore condition, Ore localization, semigroup action, Boehmians.

\section{Introduction}

We are interested in algebraic constructions that can be used in the theory of generalized functions. Jan Mikusi\'nski used the fact that every integral domain can be extended to a field of quotients to construct a space of generalized functions on $[0,\infty)$ (see \cite{JMPol}, \cite{JMEng}, as well as \cite{JGM}). In this case, the integral domain is the space $\SC([0,\infty ))$ of continuous functions on $[0,\infty )$ with the operations of addition and convolution defined as
$$ 
(f*g)(t) = \int_0^t f(s)g(t-s) \,ds.
$$
K\^osaku Yosida modified the construction by considering quotients of the form $\frac{f}{h^n}$, where $f\in \SC([0,\infty ))$ and $h$ is the constant function $1$ on $[0,\infty )$, \cite{Yosida}. Since $h$ is an element of $\SC([0,\infty ))$, the obtained ring of quotients is subset of the field of quotients obtained from $\SC([0,\infty ))$. However, one could take a somewhat different point of view.  We think of $h$ as a map $h:\SC([0,\infty )) \to \SC([0,\infty ))$ defined by
$$
hf(t)= \int_0^t f(s)\,ds
$$
and define a semigroup $S=\{h^n : n=0,1,2, \dots \}$ acting on $\SC([0,\infty ))$.  It turns out that this approach creates new possibilities. In 1987 Jan De Graaf and Tom Ter Elst proposed the following general framework for constructing spaces of generalized functions: a (possibly non-commutative) ring $R$ acting on a vector space $V$, see \cite{dGtE}. There are two essential ideas here: $R$ need not be a subset of $V$ and $R$ need not be commutative.

A similar construction in the commutative case was used in \cite{Quo}. If $X$ is a non-empty set and $S$ is a commutative semigroup of injective maps acting on $X$, then the space of pseudoquotients $\BB(X,S)$ is defined as the space of equivalence classes of pairs $(x,f)\in X \times S$ with respect to the equivalence relation defined as follows: $(x,f) \sim (y,g)$ if $gx=fy$. A construction in the case when elements of $S$ are not injective is also considered.

Pseudoquotients have desirable properties. The set $X$ can be identified with a subset of $\BB(X,S)$ and the semigroup $S$ can be extended to a commutative group of bijections acting on $\BB(X,S)$ (see, for example, \cite{KimPM}).  Under natural conditions on $S$, the algebraic structure of $X$ extends to $\BB(X,S)$. Examples of applications of pseudoquotients in generalized functions and  abstract harmonic analysis can be found in \cite{Levy}, \cite{Radon}, \cite{AMN}, and \cite{AMS}. 

There is interest in applying algebraic methods in noncommutative cases.  In \cite{R-K} we find an interesting application of the standard noncommutative localization in the theory of integro-differential algebras and operators. Noncommutative methods in generalized functions are also considered in \cite{HH}. 

The paper by De Graaf and Ter Elst was published in not easily available conference proceedings and has not received the recognition it deserves. None of the papers mentioned above reference the paper. The authors of this note were not familiar with the results presented in \cite{dGtE} until recently.

In this note we present a slightly more general version of the construction of De Graaf and Ter Elst. We construct pseudoquotients in the case where commutativity of $S$ is replaced by the left Ore condition. We obtain a space $\BB(X,S)$ that contains a copy of $X$. If, in addition to the left Ore condition, we assume right cancellation in $S$, then $S$ can be extended to a group $G$ of bijections acting on $\BB(X,S)$. We introduce a natural topology on $\BB(X,S)$ and show that all elements of $G$ are homeomorphisms on $\BB(X,S)$.  We also give some simple examples. 

\section{Pseudoquotients}

Let $X$ be a nonempty set and let $S$ be a semigroup acting on $X$ injectively. We will assume that $S$ satisfies the following Ore condition:
\begin{itemize}
\item[$\oo$] For any $f,g\in S$ there exist $f',g'\in S$ such that $f'g=g'f$.
\end{itemize} 
In $X\times S$ we introduce an equivalence relation:
\begin{equation*}\label{equiv}
(x,f) \sim (y,g) \text{ if there exist } f',g' \in S \text{ such that } f'g=g'f \text{ and } f'y=g'x.
\end{equation*}

It is easy to verify that $\sim$ is reflexive and symmetric (even without $\oo$). To show that it is also transitive we assume $(x,f) \sim (y,g)$ and $(y,g) \sim (z,h)$.  Let $f', g', g'', h'' \in S$ be such that
$$
f'g=g'f,\; f'y=g'x,\; g''h=h''g, \text{ and } g''z=h''y. 
$$
By $\oo$, there exist $j,k\in S$ such that $jf'=kh''$. Then
$$
jg'f=jf'g=kh''g=kg''h
$$
and
$$
jg'x=jf'y=kh''y=kg''z.
$$
Consequently, $(x,f) \sim (z,h)$. 

We define $\BB = \BB (X,S)= (X\times S)/\sim$.  Elements of $\BB (X,S)$ will be called pseudoquotients.  We will use the standard notation to denote elements of $\BB$: $[(x,f)]=\frac{x}{f}$.

\begin{lemma}\label{lem1} For all $x \in X$ and $f,g\in S$ we have
\begin{itemize}
\item[\rm{(a)}] $(fx,f) \sim (gx,g)$,
\item[\rm{(b)}] If $(fx,f) \sim (y,g)$, then $y=gx$,
\item[\rm{(c)}] If $(fx,f) \sim (fy,f)$, then $x=y$.
\end{itemize}
\end{lemma}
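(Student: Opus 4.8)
The plan is to read all three statements directly off the definition of $\sim$, using only the Ore condition $\oo$ and the injectivity of the $S$-action; no new idea is required. Throughout I will use that the action is associative, so $(f'g)(x)=f'(g(x))$ for $f',g\in S$ and $x\in X$.

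For part~(a), I would apply $\oo$ to $f$ and $g$ to obtain $f',g'\in S$ with $f'g=g'f$; this is already the first condition in the definition of $(fx,f)\sim(gx,g)$. Applying both sides of $f'g=g'f$ to the point $x$ gives $f'(gx)=(f'g)(x)=(g'f)(x)=g'(fx)$, which is the second condition. Hence $f',g'$ witness $(fx,f)\sim(gx,g)$.

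For part~(b), suppose $(fx,f)\sim(y,g)$ with witnesses $f',g'\in S$, so $f'g=g'f$ and $f'y=g'(fx)$. Rewriting the right-hand side, $g'(fx)=(g'f)(x)=(f'g)(x)=f'(gx)$, so $f'y=f'(gx)$, and since $f'$ acts injectively we conclude $y=gx$. Part~(c) is then an immediate special case: applying (b) with $g=f$ and with $fy$ playing the role of $y$, the relation $(fx,f)\sim(fy,f)$ forces $f(y)=f(x)$, whence $x=y$ by injectivity of $f$.

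The argument is entirely routine; the only point requiring care is bookkeeping the order of composition, i.e.\ making sure the Ore identity $f'g=g'f$ supplied by $\oo$ is matched to the correct ``numerator'' and ``denominator'' slots in the definition of $\sim$. I do not expect any genuine obstacle.
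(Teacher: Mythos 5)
Your proposal is correct and follows essentially the same route as the paper: part (a) by applying the Ore witnesses $f'g=g'f$ to the point $x$, part (b) by rewriting $g'(fx)=f'(gx)$ and cancelling the injective map $f'$, and part (c) as the special case of (b) with $fy$ in place of $y$ and $g=f$, finishing with injectivity of $f$. No gaps.
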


\begin{proof}
 (a) By $\oo$, there are $f',g'\in S$ such that $f'g=g'f$.  Then also $f'gx=g'fx$.
 
 (b) If $(fx,f) \sim (y,g)$, then there are $f',g'\in S$ such that $g'fx=f'y$ and $g'f=f'g$.  Hence
 $$
 f'y=g'fx=f'gx.
 $$
 Since $f'$ is injective, we obtain $y=gx$.
 
 (c) follows from (b), if we take $fy$ in place of $y$ and $f$ in place of $g$ and then use injectivity if $f$.
\end{proof}

Part (a) of Lemma \ref{lem1} implies that the map $\iota : X \to \BB$ defined by
$$
\iota (x)=\frac{fx}{f}
$$ 
is well-defined. From part (b) we have $[(fx,f)]=\{ (gx,g) : g\in S \}$.  This, combined with (c), implies that $\iota$ is injective.

Pseudoquotients have many properties of standard quotients.  The following simple lemma gives an example of such a property.

\begin{lemma} For all $x,y \in X$ and $f,g,h\in S$ we have
\begin{itemize}
\item[\rm{(a)}] If $(x,f) \sim (y,g)$, then $(x,f) \sim (hy,hg)$,
\item[\rm{(b)}] $\frac{x}{f}= \frac{gx}{gf}$.
\end{itemize}
\end{lemma}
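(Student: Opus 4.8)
The plan is to prove both parts of the lemma by unwinding the definition of $\sim$ and using the Ore condition $\oo$ together with Lemma~\ref{lem1}. For part (a), I start from the hypothesis $(x,f)\sim(y,g)$, so there exist $f',g'\in S$ with $f'g=g'f$ and $f'y=g'x$. I want to produce $p,q\in S$ witnessing $(x,f)\sim(hy,hg)$, i.e. $p(hg)=qf$ and $p(hy)=qx$. The natural guess is to find $p,q$ built from $f',g',h$: I need $p h g = q f$; since $f'g=g'f$, it would suffice to have $ph = $ something that reduces the left side to a multiple of $f'g$. Apply $\oo$ to the pair $h$ and $f'$ (or $f'$ and something): there exist $a,b\in S$ with $a f' = b h$. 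Then taking $p=a$ does not immediately work because $phg = af'g = ag'f$, so I can set $q = ag'$ and check $qf = ag'f = af'g = phg$ — wait, I need $phg$, and $af'g = a g' f$, so with $p=a$ we get $phg \ne af'g$ unless $ah = af'$; that is not what $\oo$ gave. Let me instead apply $\oo$ to $h$ and $g'$: there are $a,b$ with $ah = bg'$. Hmm, this is getting delicate, so the honest description is: apply $\oo$ once to an appropriate pair among $\{h, f', g'\}$ to absorb the extra factor $h$, then set $p$ and $q$ as the resulting composites and verify the two required identities $phg=qf$ and $phy=qx$ by substituting $f'g=g'f$ and $f'y=g'x$. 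The single application of $\oo$ is the only real content; everything else is substitution.

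For part (b), the cleanest route is to deduce it from part (a) combined with Lemma~\ref{lem1}. Take any $x\in X$ and $f,g\in S$. Using part (a) of Lemma~\ref{lem1}, we have $(fx,f)\sim(g fx, g f)$ after reindexing, but more directly: I want to show $\frac{x}{f}=\frac{gx}{gf}$, i.e. $(x,f)\sim(gx,gf)$. Note this is exactly part (a) of the current lemma applied with $y=x$, $g$ replaced by $f$, and $h$ replaced by $g$: since trivially $(x,f)\sim(x,f)$ (reflexivity), part (a) gives $(x,f)\sim(gx, gf)$, which is the claim. So part (b) is an immediate corollary of part (a) via reflexivity, and requires no separate argument.

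The main obstacle is purely bookkeeping in part (a): choosing the \emph{right} pair to feed into the Ore condition $\oo$ so that the extra left factor $h$ is correctly absorbed and both equations $ph\!\cdot\! g = q\!\cdot\! f$ and $ph\!\cdot\! y = q\!\cdot\! x$ come out simultaneously from the single pair $(f',g')$ witnessing the original equivalence. I expect the correct choice is to apply $\oo$ to $h$ and $f'$, obtaining $a,b$ with $af' = bh$, and then the substitution works for one of the two equations while the other needs that the \emph{same} $a,b$ respect the relation $f'g=g'f$; if a single application does not suffice, a second application of $\oo$ (exactly as in the transitivity proof above, where $\oo$ was used to align $f'$ and $h''$) will. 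No step is conceptually hard once the factors are lined up, and injectivity of elements of $S$ is not even needed here — only $\oo$.
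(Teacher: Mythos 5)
Your part (b) is fine and coincides with the paper's route: it is exactly part (a) applied to the reflexive relation $(x,f)\sim(x,f)$, with the new factor playing the role of $h$, which is what the paper means by ``(b) is a direct consequence of (a)''. The problem is part (a): you correctly single out the right move --- one application of $\oo$ to the pair $f',h$, giving $a,b\in S$ with $af'=bh$ --- but you never carry out the verification, and your hesitation (``taking $p=a$ does not immediately work'', ``if a single application does not suffice, a second application will'') stems from a bookkeeping slip: the witness multiplying the pair $(hy,hg)$ must be $b$, not $a$, because it is $bh$, not $ah$, that equals $af'$. With that corrected, the proof closes in two lines, exactly as in the paper (which writes $\alpha f'=\beta h$ for your $af'=bh$): taking $p=b$ and $q=ag'$,
$$
p(hg)=bhg=af'g=ag'f=qf
\qquad\text{and}\qquad
p(hy)=bhy=af'y=ag'x=qx,
$$
using only $bh=af'$ together with the original witnesses $f'g=g'f$ and $f'y=g'x$. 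So a single application of $\oo$ suffices, and there is no issue of $a,b$ ``respecting'' $f'g=g'f$; that relation enters only by substitution once the left factor $h$ has been traded for $f'$.

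As submitted, however, the proposal does not establish (a): the two identities defining $(x,f)\sim(hy,hg)$ are left unverified, and the fallback (``a second application of $\oo$'') is never made precise. This is a genuine, though small, gap --- the idea is the paper's, but the decisive substitution is missing. You are right that injectivity of the elements of $S$ is not needed for this lemma; only $\oo$ is used.
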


\begin{proof}
 (a)  If $(x,f) \sim (y,g)$, then there exist  $f',g' \in S$ such that $g'x=f'y$ and $g'f=f'g$. If $h\in S$, then $\alpha f' = \beta h$ for some $\alpha, \beta \in S$, by $\oo$. Hence 
$$
\alpha g'x =\alpha f'y = \beta hy
$$
and
$$
\alpha g'f= \alpha f' g = \beta hg.
$$

(b) is a direct consequence of (a).
\end{proof}

\section{Extendability of maps}

One of the fundamental properties of pseudoquotients with commutative $S$ is that $S$ can be extended to a group of bijections acting on $\BB$. To obtain such a result without commutativity, in addition to $\oo$, we will assume right cancellation for $S$, that is:

\begin{itemize}
\item[$\aaa$] If $f_1g = f_2g$, where $f_1, f_2, g \in S$, then $f_1 = f_2$.  
\end{itemize}

\begin{lemma}\label{oldA}
 If $f'g=g'f$ and $f''g=g''f$, then $hf''=kf'$ and $hg'' = kg'$ for some $h,k\in S$.
\end{lemma}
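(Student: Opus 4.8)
The plan is to obtain $h$ and $k$ directly from the Ore condition $\oo$ applied to the pair $f',f''$, and then to verify the second identity $hg''=kg'$ for that same choice of $h,k$ using the two given relations together with right cancellation $\aaa$.

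First I would apply $\oo$ to the elements $f''$ and $f'$ of $S$ to obtain $h,k\in S$ with $hf''=kf'$; this is already the first of the two desired conclusions, so the real content of the lemma is the compatibility claim $hg''=kg'$ for this particular $h,k$. Next I would multiply the hypothesis $f''g=g''f$ on the left by $h$ and the hypothesis $f'g=g'f$ on the left by $k$, giving $hf''g=hg''f$ and $kf'g=kg'f$. Since $hf''=kf'$, the left-hand sides of these two equations coincide, and hence $hg''f=kg'f$.

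Finally, right cancellation $\aaa$ applied to $hg''f=kg'f$ yields $hg''=kg'$, which completes the proof. The only step that uses anything beyond $\oo$ is this last cancellation: without $\aaa$ one could still arrange $hf''=kf'$, but there would be no mechanism forcing the matching identity on the other coordinate to survive, so the use of right cancellation is the crucial — and only mildly delicate — point.
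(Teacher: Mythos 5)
Your proposal is correct and follows exactly the paper's own argument: obtain $h,k$ with $hf''=kf'$ from $\oo$, use the two hypotheses to deduce $hg''f=kg'f$, and cancel $f$ on the right via $\aaa$. Nothing to add.
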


\begin{proof}
 Let $f'g=g'f$ and $f''g=g''f$.  Then there are $h,k\in S$ such that $hf''=kf'$, by $\oo$. Hence, $hf''g=kf'g$ and $hg''f=kg'f$, which implies $hg''=kg'$ by $\aaa$.
\end{proof}

\begin{lemma} \label{lemma}
Let $f'g=g'f$ and $f''g=g''f$.  If $f'y = g'x$ for some $x, y \in S$, then $f''y = g''x$.
\end{lemma}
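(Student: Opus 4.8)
The plan is to combine Lemma~\ref{oldA} with the injectivity of the action of $S$ on $X$; essentially no new computation is needed beyond what that lemma already provides.

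First I would apply Lemma~\ref{oldA}. The two hypotheses $f'g=g'f$ and $f''g=g''f$ are exactly the pair of Ore witnesses for $(f,g)$ required there, so the lemma produces elements $h,k\in S$ with $hf''=kf'$ and, at the same time, $hg''=kg'$. This is the key step: it manufactures a common left multiplier $h$, $k$ that ties the primed and double-primed data together.

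Next I would feed the hypothesis $f'y=g'x$ into this. Acting by $k$ on both sides gives $kf'y=kg'x$, and rewriting the two sides via $kf'=hf''$ and $kg'=hg''$ turns this into $h(f''y)=h(g''x)$ in $X$. Finally, since every element of $S$ acts on $X$ as an injective map, $h$ is injective, so $h(f''y)=h(g''x)$ forces $f''y=g''x$, which is exactly the claim.

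The only point demanding a little care is bookkeeping: one must check that the situation genuinely has the shape required by Lemma~\ref{oldA} — namely that the two given identities share the \emph{same} right factors $g$ and $f$, in the same order, which they do — and then that the cancellation of $h$ at the end is done on the \emph{left} and is justified by injectivity of the action, not by the right-cancellation axiom $\aaa$ (the latter was already consumed inside Lemma~\ref{oldA}). Apart from this, there is no real obstacle; the whole content of the statement is carried by Lemma~\ref{oldA}.
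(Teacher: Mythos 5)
Your proposal is correct and follows the paper's proof exactly: invoke Lemma~\ref{oldA} to get $h,k$ with $hf''=kf'$ and $hg''=kg'$, compute $hf''y=kf'y=kg'x=hg''x$, and cancel $h$ by injectivity of its action on $X$. The extra remarks on bookkeeping (matching the hypotheses of Lemma~\ref{oldA} and noting that the final cancellation uses injectivity rather than $\aaa$) are accurate and do not change the argument.
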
 
\begin{proof}
 Let $h,k\in S$ be as defined in Lemma \ref{oldA}. Then we have
$$ hf''y=kf'y=kg'x=hg''x.$$ 
Since $h$ is injective, we obtain $f''y=g''x$. 
\end{proof}

Note that from the above lemma it follows that, if $\frac{x}{f} \sim \frac{y}{g}$, then we have $g'x = f'y$ for any $g', f'\in S$ such that $f'g=g'f$.

\begin{lemma}\label{well-def1}
If $f'g=g'f$ and $f''g=g''f$, then $\frac{g'x}{f'} = \frac{g''x}{f''}$ for all $x\in X$.
\end{lemma}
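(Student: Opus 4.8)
The plan is to unwind the definition of $\sim$ and then feed it the witnesses already manufactured in Lemma~\ref{oldA}. By definition, $\frac{g'x}{f'} = \frac{g''x}{f''}$ means $(g'x,f') \sim (g''x,f'')$, which in turn requires producing $h,k\in S$ with $hf''=kf'$ and, applied to the ``numerators'', $h(g''x)=k(g'x)$.

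First I would invoke Lemma~\ref{oldA} with the two given relations $f'g=g'f$ and $f''g=g''f$: it yields $h,k\in S$ such that $hf''=kf'$ \emph{and} $hg''=kg'$. The first of these is exactly the compatibility condition $hf''=kf'$ needed in the definition of $\sim$ (with $f'$ playing the role of the denominator of the first pseudoquotient and $f''$ that of the second). For the second condition, simply apply both sides of $hg''=kg'$ to the element $x\in X$, obtaining $h(g''x)=k(g'x)$. Together these two facts say precisely that $(g'x,f')\sim(g''x,f'')$, i.e.\ $\frac{g'x}{f'}=\frac{g''x}{f''}$.

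There is essentially no obstacle here: the content has been isolated in Lemma~\ref{oldA} (which is where $\oo$ and $\aaa$ are actually used), and the present statement is the observation that the same pair $h,k$ that reconciles the Ore completions of $g$ and $f$ also reconciles the corresponding pseudoquotients once we evaluate at $x$. The only thing to be careful about is matching the variables to the definition of $\sim$ in the correct order, but no injectivity or cancellation is needed at this last step.
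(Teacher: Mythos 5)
Your argument is correct and matches the paper's proof exactly: both invoke Lemma~\ref{oldA} to obtain $h,k$ with $hf''=kf'$ and $hg''=kg'$, then evaluate the latter at $x$ to verify the defining conditions of $(g'x,f')\sim(g''x,f'')$. Nothing further is needed.
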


\begin{proof}
Let $h, k$ be defined as in Lemma \ref{oldA}. Then $\frac{g'x}{f'} = \frac{g''x}{f''}$ because $kf' = hf''$ and $kg'x = hg''x$.
\end{proof}

\begin{theorem}\label{goodfxn} A function $g \in S$ can be extended to a function $\tilde{g}:\BB\to\BB$ by 
$$
\tilde{g}\frac{x}{f} = \frac{g'x}{f'},
$$
where $f',g'\in S$ are any functions such that $f'g=g'f$. 
\end{theorem}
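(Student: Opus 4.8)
The entire content of the theorem is that $\tilde{g}$ is well defined, and this splits into two separate independences: first, that for a \emph{fixed} representative $(x,f)$ the element $\frac{g'x}{f'}$ does not depend on the choice of $f',g'\in S$ with $f'g=g'f$; second, that it does not depend on which representative of the class $\frac{x}{f}$ is used. The first requires no work: the existence of at least one such pair $(f',g')$ is exactly $\oo$, and Lemma~\ref{well-def1} says that the resulting element $\frac{g'x}{f'}\in\BB$ is independent of that choice. So the whole task is the second independence.

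My plan is to isolate the single ``scaling'' identity
$$\tilde{g}\,\frac{x}{f}=\tilde{g}\,\frac{kx}{kf}\qquad(k\in S)$$
and then deduce general representative-independence from it together with the identity $\frac{x}{f}=\frac{gx}{gf}$ (Lemma~2.2(b)). Indeed, if $(x,f)\sim(y,h)$ then, by the definition of $\sim$, there are $a,b\in S$ with $ah=bf$ and $ay=bx$; hence $(bx,bf)$ and $(ay,ah)$ are literally the same element of $X\times S$, while $\frac{x}{f}=\frac{bx}{bf}$ and $\frac{y}{h}=\frac{ay}{ah}$ by Lemma~2.2(b). Applying the scaling identity twice then gives $\tilde{g}\frac{x}{f}=\tilde{g}\frac{bx}{bf}=\tilde{g}\frac{ay}{ah}=\tilde{g}\frac{y}{h}$.

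It remains to prove the scaling identity, and this is the only step requiring care; it is also the only place where $\aaa$ enters. Fix $f',g'\in S$ with $f'g=g'f$, so that $\tilde{g}\frac{x}{f}=\frac{g'x}{f'}$, and fix $p,q\in S$ with $pg=q(kf)$, so that $\tilde{g}\frac{kx}{kf}=\frac{q(kx)}{p}=\frac{qkx}{p}$. Apply $\oo$ to the pair $g',qk$ to obtain $m,n\in S$ with $mg'=nqk$. Right-multiplying by $f$ and using $mg'=nqk$, $f'g=g'f$ and $pg=q(kf)$ gives $mf'g=mg'f=nqkf=npg$, so right cancellation $\aaa$ yields $mf'=np$; since also $m(g'x)=(mg')x=(nqk)x=n(qkx)$, the pair $n,m$ witnesses $(g'x,f')\sim(qkx,p)$, i.e.\ $\frac{g'x}{f'}=\frac{qkx}{p}$, which is the scaling identity.

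The one nonobvious choice is to feed $g'$ and $qk$ (rather than, say, $f'$ and $p$) into $\oo$: this is what makes both relations $f'g=g'f$ and $pg=q(kf)$ substitutable after a single right multiplication by $f$, leaving a superfluous factor $g$ that $\aaa$ can cancel. Once that is seen, the remaining verifications are routine, and the reduction of general representative-independence to the scaling identity is immediate from the definition of $\sim$ and Lemma~2.2(b). I expect this Ore-juggling step to be the only real obstacle.
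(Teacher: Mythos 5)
Your argument is correct, and its structure differs from the paper's. The paper handles representative-independence head-on: given $\frac{x_1}{f_1}=\frac{x_2}{f_2}$ it picks cross-pairs $g'f_1=f_1'g$, $g''f_2=f_2'g$, applies $\oo$ to $f_1',f_2'$ to get $f_1''f_2'=f_2''f_1'$, and then invokes Lemma \ref{lemma} (where $\aaa$ is hidden) to conclude $f_1''g''x_2=f_2''g'x_1$. You instead factor the problem through the observation that $\sim$-equivalent pairs become \emph{literally equal} after multiplying by the witnesses of the equivalence, which reduces everything to the scaling identity $\mathrm{value}(x,f)=\mathrm{value}(kx,kf)$; that reduction is clean and correct, and your Ore computation ($mg'=nqk$, then $mf'g=mg'f=nqkf=npg$, cancel $g$ by $\aaa$) does establish the scaling identity. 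One remark: the step you single out as ``the only real obstacle'' is actually already contained in Lemma \ref{well-def1}. Since $q(kf)=(qk)f$, the pair $(p,qk)$ satisfies $pg=(qk)f$, i.e.\ it is just another admissible cross-pair for the \emph{original} $f$; Lemma \ref{well-def1} applied to $x$ with the pairs $(f',g')$ and $(p,qk)$ gives $\frac{g'x}{f'}=\frac{(qk)x}{p}=\frac{q(kx)}{p}$ at once, so no fresh use of $\oo$ and $\aaa$ is needed and your hand computation, while correct, is redundant. Finally, a small omission: the theorem asserts that $\tilde g$ \emph{extends} $g$, not merely that it is well defined, so you should add the one-line check the paper gives, namely $\tilde g\,\iota(x)=\tilde g\frac{fx}{f}=\frac{g'fx}{f'}=\frac{f'gx}{f'}=\iota(gx)$ whenever $f'g=g'f$.
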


\begin{proof}
By Lemma \ref{well-def1}, $\frac{g'x}{f'}$ is independent of choice of $g', f'$, as long as $f'g=g'f$. 

Now we show that, if $\frac{x_1}{f_1} = \frac{x_2}{f_2}$, then $\tilde{g}\frac{x_1}{f_1} = \tilde{g}\frac{x_2}{f_2}$. Let $f_1',f_2',g',g'' \in S$ be such that
$$
g'f_1=f_1'g \; \text{ and } \; g''f_2=f_2'g.
$$
We need to show that $\frac{g'x_1}{f_1'} \sim \frac{g''x_2}{f_2'}$. Let  $f_1'',f_2''\in S$ be such that $f_1''f_2'=f_2''f_1'$. Then 
$$
f_1''g''f_2 = f_1''f_2'g = f_2''f_1'g = f_2''g'f_1.
$$
Since $\frac{x_1}{f_1} = \frac{x_2}{f_2}$, we obtain $f_1''g''x_2 = f_2''g'x_1$, by Lemma \ref{lemma}.  

We have shown that $\tilde{g}$ is well defined. Finally we show that $\tilde{g}$ is an extension of $g$. If $f'g=g'f$, then $f'gx=g'fx$ and hence
$$
\tilde{g} \iota(x) =  \tilde{g}\frac{fx}{f} = \frac{g'fx}{f'} = \frac{f'gx}{f'}=\iota(gx).
$$
\end{proof}

\begin{lemma} 
$\tilde{g} : \BB \to \BB$ is bijective for every $g\in S$. 
\end{lemma}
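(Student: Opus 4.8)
The plan is to produce an explicit two-sided inverse for $\tilde g$. Define $\sigma_g : \BB \to \BB$ by $\sigma_g \frac{x}{f} = \frac{x}{fg}$. Before anything else I would check that $\sigma_g$ is well defined: if $\frac{x_1}{f_1} = \frac{x_2}{f_2}$, choose $f_1',f_2' \in S$ with $f_1' f_2 = f_2' f_1$ and $f_1' x_2 = f_2' x_1$ (this is exactly the definition of $\sim$); right-multiplying the first equation by $g$ gives $f_1'(f_2 g) = f_2'(f_1 g)$ while the second is unchanged, so $(x_1, f_1 g) \sim (x_2, f_2 g)$, i.e. $\sigma_g \frac{x_1}{f_1} = \sigma_g \frac{x_2}{f_2}$. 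This uses only the semigroup operation, neither $\oo$ nor $\aaa$.

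Next I would show $\sigma_g \circ \tilde g = \mathrm{id}_\BB$. Given $\frac{x}{f}$, pick $f',g'\in S$ with $f'g = g'f$, so that $\tilde g \frac{x}{f} = \frac{g'x}{f'}$ by Theorem~\ref{goodfxn}. Then
$$
\sigma_g\!\left(\frac{g'x}{f'}\right) = \frac{g'x}{f'g} = \frac{g'x}{g'f} = \frac{x}{f},
$$
the middle equality being $f'g = g'f$ and the last the identity $\frac{hx}{hf} = \frac{x}{f}$ established earlier (with $h = g'$). Again $\aaa$ is not used, so this already shows $\tilde g$ is injective.

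Finally I would show $\tilde g \circ \sigma_g = \mathrm{id}_\BB$. Given $\frac{x}{f}$, we have $\sigma_g \frac{x}{f} = \frac{x}{fg}$, and by $\oo$ there are $f',g' \in S$ with $f'g = g'(fg) = (g'f)g$, hence $\tilde g \frac{x}{fg} = \frac{g'x}{f'}$. Applying right cancellation $\aaa$ to $f'g = (g'f)g$ yields $f' = g'f$, so $\frac{g'x}{f'} = \frac{g'x}{g'f} = \frac{x}{f}$. Thus $\sigma_g$ is a two-sided inverse of $\tilde g$, and $\tilde g$ is a bijection with $(\tilde g)^{-1} = \sigma_g$.

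The only nontrivial point is this last step, and it is the one place the hypothesis $\aaa$ is needed: without right cancellation one cannot pass from $f'g = (g'f)g$ to $f' = g'f$, and $\aaa$ is precisely what makes $\tilde g$ surjective, injectivity coming for free from $\oo$ alone. I would therefore expect the right-cancellation step to be the part that deserves the most care in the writeup; the rest is bookkeeping with the defining relations of $\sim$ and the already-established identity $\frac{hx}{hf}=\frac{x}{f}$.
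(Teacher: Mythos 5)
Your proof is correct, but it is organized differently from the paper's. The paper argues injectivity and surjectivity separately: for injectivity it takes representatives $\frac{g'x_1}{f_1'}$ and $\frac{g''x_2}{f_2'}$ of the two images, produces $h_1,h_2$ via $\oo$, invokes Lemma \ref{lemma}, and concludes by transitivity; for surjectivity it makes the single computation $\frac{x}{f}=\frac{fx}{f^2}=\tilde g\bigl(\frac{x}{fg}\bigr)$, using the specific pair $f'=f^2$, $g'=f$ with $f^2g=f(fg)$. You instead exhibit $\sigma_g\frac{x}{f}=\frac{x}{fg}$ as an explicit two-sided inverse (the paper only records this formula in a remark after its proof), and you verify its well-definedness first --- a point the paper never needs to address since it never treats $\frac{x}{fg}$ as a function of the class $\frac{x}{f}$. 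Your route buys a cleaner injectivity argument (a left inverse makes the paper's manipulation with $h_1,h_2$ and Lemma \ref{lemma} unnecessary) at the modest cost of the extra well-definedness check, which you carry out correctly. One quibble with your closing commentary: $\aaa$ is not actually needed in your last step either, since by Lemma \ref{well-def1} you may simply choose any $g'\in S$ and take $f'=g'f$, because $(g'f)g=g'(fg)$ --- this is exactly the paper's choice $g'=f$, $f'=f^2$; the genuine role of $\aaa$ in this circle of results is to make $\tilde g$ well defined in Theorem \ref{goodfxn} (via Lemmas \ref{oldA} and \ref{lemma}), so the claim that injectivity needs only $\oo$ while surjectivity is where $\aaa$ enters does not reflect how the hypotheses are really used.
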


\begin{proof} Injectivity: Suppose $\tilde{g}(\frac{x_1}{f_1}) = \tilde{g}(\frac{x_2}{f_2})$ and let $f_1', g', f_2', g'' \in S$ be such that 
$$f_1'g = g'f_1 \; \text{ and } \; f_2'g = g''f_2.$$
 Then $\tilde{g}(\frac{x_1}{f_1}) = \frac{g'x_1}{f_1'}$ and $\tilde{g}(\frac{x_2}{f_2}) = \frac{g''x_2}{f_2'}$. Now let $h_1, h_2 \in S$ be such that $h_2f_1' = h_1f_2'$. Then $h_2g'x_1 = h_1g''x_2$, by Lemma \ref{lemma}. We have
$$\frac{x_1}{f_1} = \frac{g'x_1}{g'f_1} = \frac{g'x_1}{f_1'g} = \frac{h_2g'x_1}{h_2f_1'g}$$
and
$$\frac{x_2}{f_2} = \frac{g''x_2}{g''f_2} = \frac{g''x_2}{f_2'g} = \frac{h_1g''x_2}{h_1f_2'g}.$$ 
Since $\frac{h_1g''x_2}{h_1f_2'g} = \frac{h_2g'x_1}{h_2f_1'g}$, we obtain $\frac{x_1}{f_1} = \frac{x_2}{f_2}$ by transitivity.

Surjectivity: If $\frac{x}{f} \in B$, then $\frac{x}{f}=\frac{fx}{f^2} = \tilde{g}(\frac{x}{fg})$ since $f(fg) = f^2(g)$. 
\end{proof}

Note that $\tilde{g}^{-1}(\frac{x}{f}) = \frac{x}{fg}$.

{\begin{theorem}
 $S$ can be extended to a group $G$ of bijections acting on $\BB$.  Moreover, $G$ satisfies conditions $\oo$ and $\aaa$.
\end{theorem}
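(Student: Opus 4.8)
The plan is to realize $G$ concretely as a subgroup of the group $\mathrm{Sym}(\BB)$ of all bijections of $\BB$, namely the subgroup generated by the maps $\tilde g$ already constructed. The one fact needed beyond the earlier results is that $g\mapsto\tilde g$ is multiplicative: $\widetilde{g_1g_2}=\tilde g_1\circ\tilde g_2$ for all $g_1,g_2\in S$. To see this, fix $\frac{x}{f}\in\BB$, use $\oo$ to choose $f',g_2'\in S$ with $f'g_2=g_2'f$ and then $f'',g_1'\in S$ with $f''g_1=g_1'f'$; then $f''(g_1g_2)=(g_1'g_2')f$, so $\widetilde{g_1g_2}\frac{x}{f}=\frac{g_1'g_2'x}{f''}$, while also $\tilde g_2\frac{x}{f}=\frac{g_2'x}{f'}$ and $\tilde g_1\frac{g_2'x}{f'}=\frac{g_1'g_2'x}{f''}$, so the two agree (well-definedness being Theorem \ref{goodfxn}). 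Since $\tilde g\circ\iota=\iota\circ g$ and $\iota$ is injective, distinct elements of $S$ yield distinct maps $\tilde g$, so $g\mapsto\tilde g$ embeds $S$ as a subsemigroup of $\mathrm{Sym}(\BB)$; I henceforth identify $g$ with $\tilde g$.

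Next I would set $G:=\{\,g^{-1}h : g,h\in S\,\}\subseteq\mathrm{Sym}(\BB)$ (recall $g^{-1}\frac{x}{f}=\frac{x}{fg}$) and check it is a group. It contains $\mathrm{id}=g^{-1}g$, it contains each $\tilde g=(g^2)^{-1}(g^3)$, and it is closed under inversion because $(g^{-1}h)^{-1}=h^{-1}g$; the substantive point is closure under composition. For that I would first prove the rewriting rule: whenever $g_1,h_1\in S$ satisfy $g_1h=h_1g$, one has $h\,g^{-1}=g_1^{-1}h_1$ in $\mathrm{Sym}(\BB)$ — indeed, composing on the left with $g_1$ and on the right with $g$ turns this identity into $\widetilde{g_1h}=\widetilde{h_1g}$, which holds since $g_1h=h_1g$ in $S$; and $\oo$ guarantees such $g_1,h_1$ exist. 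Granting this, for $g_1^{-1}h_1,\,g_2^{-1}h_2\in G$ choose (by $\oo$) $a,b\in S$ with $ah_1=bg_2$; then
$$(g_1^{-1}h_1)(g_2^{-1}h_2)=g_1^{-1}(h_1g_2^{-1})h_2=g_1^{-1}a^{-1}bh_2=(ag_1)^{-1}(bh_2)\in G .$$
Hence $G$ is a group of bijections of $\BB$ containing the identified copy of $S$, and the $G$-action restricts on $\iota(X)$ to the $S$-action since each $\tilde g$ extends $g$.

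It remains to check that the group $G$ satisfies $\oo$ and $\aaa$, and both are automatic for any group: given $\phi,\psi\in G$, put $\phi':=\phi$ and $\psi':=\phi\psi\phi^{-1}$, so that $\psi'\phi=\phi\psi=\phi'\psi$, which is $\oo$; and $\phi_1\psi=\phi_2\psi$ forces $\phi_1=\phi_2$ by composing with $\psi^{-1}$, which is $\aaa$.

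I expect the only real obstacle to be the composition-closure step, and inside it the bookkeeping of \emph{which} Ore relation to invoke and in which order: the identity $h\,g^{-1}=g_1^{-1}h_1$ must come from a left Ore completion of the pair $(g,h)$, and the elements $a,b$ used to merge two fractions from a left Ore completion of $(g_2,h_1)$. Once the rewriting rule is established, closure under inverses, the inclusion $S\subseteq G$, the extension property, and the verification of $\oo$ and $\aaa$ are all routine.
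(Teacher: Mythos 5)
Your proposal is correct, and its punchline is the same as the paper's: once $G$ is realized as a group of bijections of $\BB$, conditions $\oo$ and $\aaa$ are automatic because every element is invertible (your explicit witnesses $\phi'=\phi$, $\psi'=\phi\psi\phi^{-1}$, and cancellation by $\psi^{-1}$, are exactly what the paper means by ``everything in $G$ is invertible''). The difference is in how much structure you build along the way. The paper's proof is a one-liner: it implicitly takes $G$ to be the subgroup of $\mathrm{Sym}(\BB)$ generated by the bijections $\tilde g$ (bijectivity having been established in the preceding lemma, and the extension property in Theorem \ref{goodfxn}), so closure under composition and inversion costs nothing and no further lemmas are needed. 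You instead prove that $g\mapsto\tilde g$ is multiplicative, establish the Ore-style rewriting rule $h\,g^{-1}=g_1^{-1}h_1$ whenever $g_1h=h_1g$, and use it to show that the set of left fractions $\{g^{-1}h : g,h\in S\}$ is already closed under composition and inversion, hence equals $G$. That extra work is not required for the theorem as stated, but it buys a genuinely stronger structural conclusion not recorded in the paper: every element of $G$ is a single fraction $g^{-1}h$, which is the precise analogue of the classical left Ore localization and makes the action of $G$ on $\BB$ completely explicit. Your verifications (the computation showing $\widetilde{g_1g_2}=\tilde g_1\tilde g_2$, the identity $\tilde g=(g^2)^{-1}(g^3)$, and the merging step via $ah_1=bg_2$) are all sound; the only cosmetic issue is the reuse of the symbols $g_1,h_1$ both for the fraction components and in the statement of the rewriting rule, which you should rename for readability.
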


\begin{proof}
It suffices to show that $G$ satisfies conditions $\oo$ and $\aaa$, but this follows immediately from the fact that everything in $G$ is invertible.
\end{proof}

\begin{remark} 
We can think of $\BB$ as the set of all solutions $\xi$ of the equations $f(\xi) = x$, for any $f \in S$ and $x \in X$. These solutions are unique, for if $\tilde{f}(\frac{x_1}{f_1}) =x$ and  $\tilde{f}(\frac{x_2}{f_2}) =x$, then $\frac{x_1}{f_1} = \frac{x_2}{f_2}$, since $\tilde f$ acts injectively on $\BB$. Also any element of $\BB$ is a solution to some equation, since $\tilde{f}(\frac{x}{f}) = \frac{fx}{f} = \iota (x)$. \end{remark}

\section{Topology of pseudoquotients}

Now we assume that $X$ is a topological space and $S$ is a semigroup of continuous injections acting on $X$.  To define a topology in $\BB$ we equip $S$ with the discrete topology, then $X \times S$ with the product topology, and finally $\BB = X\times S / \sim$ with the quotient topology. This topology has desirable properties.  In particular, the embedding $\iota : X \to \BB$ is continuous and $G$ is a group of homeomorphisms acting on $\BB $. 

\begin{theorem}
 $\iota$ is continuous.
\end{theorem}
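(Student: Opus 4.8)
The plan is to verify continuity of $\iota$ directly from the construction of the quotient topology on $\BB$. Recall that $\BB = (X \times S)/\!\sim$ carries the quotient topology induced by the canonical surjection $q : X \times S \to \BB$, where $X \times S$ has the product topology and $S$ is discrete. A set $U \subseteq \BB$ is open precisely when $q^{-1}(U)$ is open in $X \times S$, i.e. when for each $f \in S$ the slice $\{x \in X : q(x,f) \in U\}$ is open in $X$. So to show $\iota$ is continuous it suffices to show that for every open $U \subseteq \BB$, the preimage $\iota^{-1}(U) = \{x \in X : \frac{fx}{f} \in U\}$ is open in $X$ (the definition of $\iota$ being independent of the choice of $f$ by Lemma \ref{lem1}(a)).

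First I would fix an open set $U \subseteq \BB$ and an arbitrary $x_0 \in \iota^{-1}(U)$, and aim to produce an open neighborhood of $x_0$ contained in $\iota^{-1}(U)$. Pick any $f \in S$; since $\frac{f x_0}{f} \in U$, the point $(f x_0, f) \in q^{-1}(U)$, which is open in $X \times S$. By the product topology (with $S$ discrete), there is an open set $V \subseteq X$ with $f x_0 \in V$ and $V \times \{f\} \subseteq q^{-1}(U)$; that is, $q(v, f) = \frac{v}{f} \in U$ for all $v \in V$. Now the key observation: because $f$ is a continuous injection, $f^{-1}(V)$ is an open neighborhood of $x_0$ in $X$, and for every $w \in f^{-1}(V)$ we have $fw \in V$, hence $\iota(w) = \frac{fw}{f} = q(fw, f) \in U$. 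Thus $f^{-1}(V) \subseteq \iota^{-1}(U)$, giving the desired neighborhood.

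Since $x_0 \in \iota^{-1}(U)$ was arbitrary, $\iota^{-1}(U)$ is open, and as $U$ was an arbitrary open subset of $\BB$, the map $\iota$ is continuous.

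The only mild subtlety — and the step I would be most careful about — is the passage from "$(fx_0, f)$ lies in the open set $q^{-1}(U)$" to "a basic product-open box $V \times \{f\}$ around it also lies in $q^{-1}(U)$," together with pulling that back through $f$. This is where both hypotheses on $S$ are used: discreteness of $S$ guarantees that $\{f\}$ alone is the relevant second coordinate (no neighborhood of $f$ other than $\{f\}$ intervenes), and continuity of $f$ ensures $f^{-1}(V)$ is open. Everything else is a routine unwinding of the quotient and product topologies, so I would not belabor it.
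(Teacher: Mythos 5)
Your argument is correct and is essentially identical to the paper's: both unwind the quotient topology at a point $x_0\in\iota^{-1}(U)$, use discreteness of $S$ to find a box $V\times\{f\}$ inside the saturated open preimage, and pull $V$ back through the continuous map $f$ to get the open neighborhood $f^{-1}(V)\subseteq\iota^{-1}(U)$. Your write-up is in fact slightly more careful than the paper's (noting independence of the choice of $f$ via Lemma \ref{lem1}(a)), but the route is the same.
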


\begin{proof}
Let $U$ be open in $\BB$, and let $x\in \iota^{-1}(U)$. Then $ \frac{fx}{f} \in U$. Since $U$ is open in $\BB$, we have that $fx \times f \in V \times f \subset p^{-1}(U)$, where $V$ is open in $X$ and $p:X \times S \to \BB$ is the quotient map defined by $p(x \times f) = \frac{x}{f}$. Then $x \in f^{-1}(V) \subset \iota^{-1}(U)$, which shows that $\iota^{-1}(U)$ is open.
\end{proof}
\begin{theorem}
Elements of the group $G$ are homeomorphism.
\end{theorem}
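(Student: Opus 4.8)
The plan is to show that for every generator $g\in S$ both $\tilde g$ and $\tilde g^{-1}$ are continuous maps $\BB\to\BB$; since every element of $G$ is a finite composition of maps of this form, and the homeomorphisms of $\BB$ form a group, this will give that every element of $G$ is a homeomorphism. Throughout I will use the quotient map $p:X\times S\to\BB$ together with the following observation: because $S$ carries the discrete topology, $X\times S=\coprod_{f\in S}X\times\{f\}$ is a topological disjoint union of clopen slices, each canonically homeomorphic to $X$. Consequently a map out of $X\times S$ is continuous if and only if its restriction to every slice $X\times\{f\}$ is continuous, and by the universal property of the quotient a map $h:\BB\to\BB$ is continuous as soon as $h\circ p$ is.

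First I would treat $\tilde g^{-1}$, which is the easy direction. By the remark recorded after the bijectivity lemma, $\tilde g^{-1}\bigl(\tfrac{x}{f}\bigr)=\tfrac{x}{fg}$, so the map $\rho_g:X\times S\to X\times S$, $\rho_g(x,f)=(x,fg)$, satisfies $p\circ\rho_g=\tilde g^{-1}\circ p$. On each slice $\rho_g$ restricts to $x\mapsto(x,fg)$, which is continuous (it is the identity in the first coordinate and constant in the second), so $\rho_g$ is continuous; hence $\tilde g^{-1}\circ p=p\circ\rho_g$ is continuous and therefore $\tilde g^{-1}$ is continuous.

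Next, for $\tilde g$ itself. Here the subtlety — and the step I expect to be the main obstacle — is that $\tilde g\bigl(\tfrac{x}{f}\bigr)=\tfrac{g'x}{f'}$ is only defined after choosing $f',g'\in S$ with $f'g=g'f$, so $\tilde g$ is not visibly the descent of a single obvious continuous map on $X\times S$. The way around this is to exploit discreteness of $S$: for each $f\in S$ fix, once and for all, a pair $f'_f,g'_f\in S$ with $f'_f g=g'_f f$ (possible by $\oo$), and define $\sigma:X\times S\to X\times S$ by $\sigma(x,f)=(g'_f x,\,f'_f)$. On the slice $X\times\{f\}$ this is $x\mapsto(g'_f x,f'_f)$, which is continuous because $g'_f\in S$ acts continuously on $X$ and the second coordinate is constant; hence $\sigma$ is continuous. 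By Lemma~\ref{well-def1} the value $\tfrac{g'_f x}{f'_f}$ is independent of the chosen pair, so $p\circ\sigma(x,f)=\tfrac{g'_f x}{f'_f}=\tilde g\bigl(\tfrac{x}{f}\bigr)=\tilde g\circ p\,(x,f)$. Thus $\tilde g\circ p=p\circ\sigma$ is continuous, and so $\tilde g$ is continuous.

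Finally I would assemble the pieces: each $\tilde g$ with $g\in S$ is a continuous bijection whose inverse $\tilde g^{-1}$ is also continuous, hence a homeomorphism, and therefore every element of $G$ — being a finite product of the maps $\tilde g^{\pm1}$ — is a homeomorphism of $\BB$. The only genuinely non-formal point in the whole argument is the reconciliation in the third paragraph of the choice-dependent defining formula for $\tilde g$ with the need for a continuous lift to $X\times S$; discreteness of $S$ is precisely what makes an arbitrary, slice-by-slice choice of $f'_f,g'_f$ harmless.
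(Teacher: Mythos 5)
Your proof is correct and follows essentially the same route as the paper: continuity of $\tilde g$ and $\tilde g^{-1}$ is reduced, via the quotient map $p$ and discreteness of $S$, to continuity of maps defined on $X\times S$ (your $p\circ\sigma$ and $p\circ\rho_g$ coincide with the paper's $\psi_g$ and $\psi_{g^{-1}}$). The only difference is cosmetic: where the paper checks continuity of $\psi_g$ by a direct open-set argument using basic boxes $V\times\{f'\}$, you factor it through a continuous slice-wise lift $\sigma$ built from a choice of Ore pairs, with Lemma~\ref{well-def1} (and the well-definedness of $\tilde g$) guaranteeing that this choice is harmless after composing with $p$.
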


\begin{proof}
Consider the map $\psi_g : X \times S \to \BB$ defined by $\psi_g(x \times f) = \frac{g'x}{f'}$, where $f'g = g'f$. Since the function $\tilde g$ is well-defined, so is $\psi_g$. Further, it is clear that $\psi_g(p^{-1}(\{\frac{x}{f}\})) = \frac{g'x}{f'}$. It follows that, if $\psi_g$ is continuous, then $\tilde g$ is also continuous. Let $U$ be open in $\BB$, and let $x \times f \in \psi_g^{-1}(U)$. Then $ \frac{g'x}{f'} \in U$. Since $U$ is open in $\BB$, we have that $g'x \times f' \in V \times f' \subset p^{-1}(U)$, where $V$ is open in $X$. Then it is easy to show that $x \times f \in g'^{-1}(V) \times f \subset \psi_g^{-1}(U)$, from which it follows that $\psi_g^{-1}(U)$ is open in $X\times S$ and that $\psi_g$ is continuous. 

To show that $\tilde g^{-1}$ is continuous, we similarly show that $\psi_{g^{-1}} : X \times S \to \BB$ defined by $\psi_g(x \times f) = \frac{x}{fg}$ is continous. 
\end{proof}

\section{Simple examples}

We end this paper with some examples of the described construction.

\begin{example}
Let $X = \NN$ and let $S$ be the semigroup of all functions on $\NN$ of the form $f(x)=mx^n$ where  $m, n \in \NN$. It is easy to see that $\oo$ and $\aaa$ are satisfied. In this case $\BB$ can be identified with the set of all real numbers of the form $\sqrt[n]{\frac{k}{m}}$, where $k,m,n\in \NN$, and the extended group $G$ can be described as all functions of the form $f(x)=px^q$, where $p$ and $q$ are positive rational numbers.
\end{example}

\begin{example} Let $X = \ZZ^n$ and let $S$ be the semigroup of all functions $f : \ZZ^n \to \ZZ^n$ of the form $f(x) = Mx+b$, where $M$ is an $n \times n$ matrix of rank $n$ with integer entries and $b\in\ZZ^n$.   To check $\oo$, let $f(x) = M_1x + b_1$ and $g(x) = M_2x + b_2$. If $m_1 = \det M_1$ and $m_2 = \det M_2$, then the functions 
$$f'(x) = m_1m_2M_2^{-1}x + m_1m_2M_1^{-1}b_1$$
and
$$g'(x) = m_1m_2M_1^{-1}x + m_1m_2M_2^{-1}b_2$$ 
satisfy the equation $f'g = g'f$. It is easy to see that $\aaa$ is also satisfied. 

Here $\BB$ can be identified with $\QQ^n$ and the extended group $G$ can be described as all functions $f : \QQ^n \to \QQ^n$ of the form $f(x) = Mx+b$, where $M$ is an $n \times n$ invertible matrix with rational entries and $b\in\QQ^n$.
\end{example}}

\begin{example}
 Let
$$
X=\left\{\sum_{k=0}^n \lambda_k \chi_{[k, k+1)}: \lambda_k\in \RR \text{ and }n\in \NN  \right\}
$$
and let $\delta , \tau : X\to X$ be defined as follows:
$$
\delta \left( \sum_{k=0}^n\lambda_k \chi_{[k, k+1)} \right)= \sum_{k=0}^n \frac{\lambda_k}2 \chi_{[2k, 2k+2)}
\quad \text{and} \quad 
\tau \left( \sum_{k=0}^n\lambda_k \chi_{[k, k+1)} \right)= \sum_{k=0}^n\lambda_k \chi_{[k+1, k+2)}.
$$
If we define $I=\chi_{[0,1)}$, then we can write
$$
\sum_{k=0}^n\lambda_k \chi_{[k, k+1)} = \sum_{k=0}^n\lambda_k \tau^k I.
$$ 
Let $S$ be the semigroup generated by $\delta$ and $\tau$. Since $\delta \tau = \tau^2 \delta$, we have $S=\left\{ \tau^m \delta^n : m,n \geq 0 \right\}$. Since $\delta$ and $\tau$ are injective, $S$ acts on $X$ injectively.  We will show that $S$ satisfies $\oo$ and $\aaa$. To verify $\oo$, assume $\tau^{m_1} \delta^{n_1},\tau^{m_2} \delta^{n_2} \in S$. If $n_1=n_2=n$, then 
$$
\tau^{m_2}\tau^{m_1} \delta^n=\tau^{m_1}\tau^{m_2} \delta^n.
$$
If $n_1<n_2$, then 
$$
\tau^{m_2}\delta^{n_2-n_1}\tau^{m_1} \delta^{n_1}=\tau^{2m_1}\tau^{m_2} \delta^{n_2}.
$$
To verify $\aaa$, first observe that for every $\phi \in S$ the representation $\phi =\tau^m \delta^n$ is unique.  Now, if
$$
\tau^{m_1} \delta^{n_1}\tau^k \delta^l=\tau^{m_2} \delta^{n_2}\tau^k \delta^l,
$$
we have
$$
\tau^{m_1+2k} \delta^{n_1+l}=\tau^{m_2+2k} \delta^{n_2+l}.
$$
Thus $m_1=m_2$ and $n_1=n_2$.

Note that, since $\delta$ and $\tau$ preserve the integral and the $L^1$-norm, we can define
$$
\int \frac{f}{\tau^m \delta^n}= \int f \quad \text{and} \quad \left\| \frac{f}{\tau^m \delta^n}\right\|_1 = \| f \|_1,
$$
where $f\in X$. The obtained space of pseudoquotients $\BB$ can be identified with a dense subspace of $L^1(\RR)$.
\end{example}

\begin{example}
Let $X_1,X_2,\dots$ be a sequence of nonempty sets of the same cardinality. For every $n\in\NN$, let $\varphi_n : X_n \to X_{n+1}$ be a bijection and let $\psi_n :X_n\to X_n$ be an injection such that the following diagram commutes:
\begin{displaymath}
\xymatrix{X_n \ar[r]^{\psi_n} \ar[d]_{\varphi_n} & X_n \ar[d]^{\varphi_n} \\X_{n+1} \ar[r]_{\psi_{n+1}} & X_{n+1}}
\end{displaymath}
Let $X=\cup_{n=1}^\infty X_n\times \{n\}$ and let $\Phi : X \to X$ be the function defined by 
$$
\Phi ((x,n)) = (\varphi_n(x),n+1).
$$
Let $\Psi_n : X \to X$ be defined by 
$$
\Psi_n((x,m)) = \left\{ \begin{array}{ll}
(\psi_n(x),n) &\mbox{ if $m=n$,} \\
(x,m) &\mbox{ otherwise.}
\end{array} \right.
$$
Since $\Phi \Psi_n = \Psi_{n+1} \Phi$ for every $n\in\NN$ and the maps $\Psi_n$ commute, the semigroup generated by $\Phi, \Psi_1, \Psi_2, \dots$ is
$$
S=\left\{\Psi_1^{k_1} \dots \Psi_m^{k_m} \Phi^n : k_1, \dots , k_m, m, n \in \NN \cup \{0\}   \right\}.
$$ 
It is easy to check that $S$ satisfies $\oo$ and $\aaa$.

\end{example}

{\bf Acknowledgements.} The authors would like to thank Joseph Brennan and Jan De Graaf for helpful comments.

\end{document}